\newcommand\C{{\mathbb C}}
\newcommand\RR{{\mathcal R}}
\newcommand\KK{\mbox{\boldmath$\mathcal K$}}
\newcommand\RS{{\mathcal R}_s}
\newcommand\blambda{\mbox{\boldmath$\lambda$}}
\newcommand\dee{\partial}
\newcommand\Om{\Omega}
\newcommand\Obar{\overline{\Omega}}
\newcommand\Ot{\widetilde\Omega}
\newcommand\Ohat{\widehat\Omega}
\renewcommand\phi{\varphi}
\numberwithin{equation}{section}
\begin{document}

\title[Algebraic geometry and the Bergman kernel]
{Real algebraic geometry of real algebraic Jordan curves in
the plane and the Bergman kernel}
\author[S.~Bell]
{Steven R.~Bell}

\address[]{Mathematics Department, Purdue University, West Lafayette,
IN  47907}
\email{bell@math.purdue.edu}

\subjclass{30C40}

\begin{abstract}
We characterize the space of restrictions of real
rational functions to certain algebraic Jordan curves
in the plane via the Dirichlet-to-Neumann map associated
to the domain in the complex plane bounded by the curve
and its Bergman kernel. The characterization leads to a
partial fractions-like decomposition for such rational
functions and new ways to describe such Jordan curves.
The multiply connected case is also explored.
\end{abstract}

\maketitle

\theoremstyle{plain}

\newtheorem {thm}{Theorem}[section]
\newtheorem {lem}[thm]{Lemma}

\hyphenation{bi-hol-o-mor-phic}
\hyphenation{hol-o-mor-phic}

\begin{quotation}
\begin{center}
\begin{em}
In honor of L\'aszl\'o's 70th!
\end{em}
\end{center}
\end{quotation}

\section{Introduction}
\label{sec1}

Shopping for a birthday gift for an old friend can be daunting,
but sometimes you stumble onto a gift that seems just right.
L\'aszl\'o Lempert is known for going back to something, and
back again, until he has just the right way of thinking about
it, even in an infinite dimensional setting. I will revisit some
results in \cite{B,B2,B6,BGS}, simplify proofs, and try to
convince the reader that there might be better ways to think
about them.

Suppose that $\gamma$ is a $C^\infty$ smooth real algebraic Jordan
curve in the plane given by the zero set of a polynomial $p(x,y)$
of two real variables. Such a curve can be described locally
by a parameterization of the curve given by a pair of
smooth real analytic functions $(x(t),y(t))$ where $t$ is a
real parameter. By writing $z=x+iy$ and by letting $t$ become
a complex variable $\tau$, a holomorphic map $z(\tau)$ defined
on a neighborhood of an open line segment on the real line that
maps the segment into the curve is obtained. We may assume that
$z'(t)$ is nonvanishing and that $z(\tau)$ locally maps a
neighborhood of the line segment one-to-one onto a collared
neighborhood of the Jordan curve. The function
$$R(w)=z\left(\,\overline{z^{-1}(w)}\,\right),$$
is an antiholomorphic reflection function for the Jordan
curve. Locally, it maps the inside of the curve to the
outside, the outside to inside, and it is its own inverse.
The curve is fixed by the map. The Schwarz function $S(z)$
associated to the curve is the unique holomorphic function
defined on a neighborhood of the curve such that $S(z)=\bar z$
along the curve. It is given by
$$S(z)=\overline{R(z)}.$$

A primary object of study in this paper is the field of
functions that are the restrictions of rational functions
$r(x,y)$ to the Jordan curve. We seek to expand such
functions in a generalized partial fractions expansion.
As is typical in problems in real algebra, we will gain
insight by letting the variables and coefficients be
complex. Indeed, by writing $z=x+iy$, we may express $r(x,y)$
as a quotient $P(z,\bar z)/Q(z,\bar z)$, where $P$ and $Q$ are
polynomials with complex coefficients. The example of the
function $1-z\,\bar z$ on the unit circle alerts us to the
danger that we must avoid the possibility that $Q(z,\bar z)\equiv0$
on $\gamma$. Because the curve is defined by a
polynomial equation, we are in danger of dividing by a
polynomial that vanishes on the curve when we talk of
rational functions of $x$ and $y$ restricted to the
curve. We now define the space of functions that we
intend to study carefully. Keep in mind that that
the space will contain rational functions $r(x,y)=
p(x,y)/q(x,y)$ where $p$ and $q$ are polynomials
with real coefficients, where $q$ is not identically
zero on $\gamma$. That is why the term ``real'' algebraic
geometry is used in the title.

A rational function $R(z,\bar z)$ is given as a
quotient $R(z,\bar z)=P(z,\bar z)/Q(z,\bar z)$ where
$P(z,w)$ and $Q(z,w)$ are relatively prime complex
polynomials (with complex coefficients). For the restriction
of such a rational function to $\gamma$ to be meaningful, we
must assume that $Q(z,\bar z)$ is not identically zero on
the curve. We now consider the functions
$P(z,S(z))$ and $Q(z,S(z))$, which agree with $P(z,\bar z)$
and $Q(z,\bar z)$ on $\gamma$. They are holomorphic on a
collared neighborhood of the curve and their quotient
agrees with the rational function on the curve away
from the zeroes of the denominator. The zeroes of the
denominator are isolated since $Q(z,\bar z)$ is not
identically zero on the curve. This shows that the
rational function $R(z,\bar z)$ is $C^\infty$ smooth on the curve,
except perhaps at finitely many pole-like singular points.
Similarly, the restriction of a rational function that
is not identically zero on the curve has at most finitely
many isolated zeroes on the curve. We will call the space
of such nondegenerate rational functions $R(z,\bar z)$
restricted to the curve in this way $\RR$. It is easy to
verify that $\RR$ is a field.  Let
$\RS$ denote the subspace of $\RR$ of functions without
singularities on $\gamma$. Note that such functions are
$C^\infty$ smooth on $\gamma$.

The purpose of this paper is to show that, after a change
of variables that is close to the identity,
the spaces $\RS$ and $\RR$ can be described nicely
in algebraic terms and that the descriptions give
rise to new ways to represent the curve.
The description also reveals that the algebra of
such rational functions restricted to the curve can
be viewed as a linear space in a manner reminiscent
of the partial fractions decomposition of a rational
function in the plane. The tools used to obtain these
results are the Dirichlet-to-Neumann map associated
to the domain $\Om$ enclosed by $\gamma$ and its
Bergman kernel. The change of variables is given as
a form of generalized Bergman coordinates.

The motivating example for what we are about to do
is the field of restrictions of rational functions
to the unit circle in the plane. The Schwarz function
for the unit circle is $S(z)=1/z$. If $R(z,\bar z)$ is
a function in $\RS$, then $R(z,1/z)$ is a rational
function of $z$ that agrees with $R(z,\bar z)$ on the unit
circle. Expanding $R(z,1/z)$ in partial fractions
yields a decomposition
$$R(z,1/z)=P(z)+\sum_{i=1}^n\sum_{k=1}^{N_n} \frac{A_{ik}}{(z-a_i)^k}+
\sum_{i=1}^m\sum_{k=1}^{M_m} \frac{B_{ik}}{(z-b_i)^k}$$
where $P(z)$ is a polynomial and where the poles $a_i$ are
inside the unit circle and the poles $b_i$ are outside.
(Note that we can think of a polynomial as having a pole
at the point at infinity.) If we now replace $z$ in
the first double sum by $1/\bar z$ and note that the
holomorphic poles of the individual terms are moved from
inside the unit circle to antiholomorphic poles that are
outside of the unit circle by this change, we obtain
a decomposition of $R(z,\bar z)$ on the unit circle as a sum
$r_1(z)+\overline{r_2(z)}$ where $r_1$ and $r_2$ are
rational functions with only poles outside the unit
circle. This decomposition was studied by Peter Ebenfelt
\cite{E} in his studies of the Dirichlet problem with
rational boundary data where he also noted that it
solves the Dirichlet problem with rational boundary data
$R(z,\bar z)$. If $r_1(z)+\overline{r_2(z)}$ were
identically zero on the unit circle, then the Schwarz
reflection principle yields that $r_1(z)$ inside the
unit circle extends $-\overline{r_2(1/\bar z)}$ outside
the unit circle to a bounded entire function. Hence the
decomposition is unique up to the addition of a constant,
and unique if we agree to include that constant in the
$r_1$ term. Replacing $z$ by $1/\bar z$ in the polynomial
and the second double sum instead of the first leads
to a similar decomposition where all the poles of both
rational functions are {\it inside\/} the unit circle.
Thus, we have shown that there are three ways to
represent the restriction of a rational function to
the unit circle: the poles inside decomposition, the
poles outside decomposition, and as the restriction of
a holomorphic rational function to the unit circle.
(A fourth way is as the restriction of an
antiholomorphic rational function to the unit circle,
which can be seen by starting with the conjugate of
the rational function instead and expressing it as
the restriction of a complex rational function.)
We thank the referee for pointing out that Philip
Davis also used related ideas in Chapter~13 of \cite{D}
using the classical method of images of potential
theory on the unit disc.

To accomplish similar decompositions on a more
general algebraic Jordan curve, it will be important
that its Schwarz function extend to the inside of
the domain as a meromorphic function. This
condition is equivalent to the domain being a
quadrature domain with respect to area measure,
as shown by Aharonov and Shapiro \cite{AS} (see
also \cite{S}). Gustafsson \cite{G} later connected
the study of area quadrature domains to the double
of the domain in the multiply connected setting
and we will use many of his ideas in what follows.

Area quadrature domains are dense among bounded
smooth simply connected domains in a very strong
sense, and so our first step is to make a change
of variables that is $C^\infty$ close to the identity
to transform our curve to be the boundary of an area
quadrature domain.

\section{Step 1: A change of variables}
\label{sec2}

Let $\gamma$ and $\Om$ be as in \S\ref{sec1} and let
$K(z,w)$ denote the Bergman kernel associated to $\Om$.
Bergman coordinates that can be taken as close to the
identity as desired are defined in \cite{B6} as
follows. Let $\KK$ denote the Bergman span associate to
$\Om$, which is the complex linear span of all functions of
the form
$$K_a^0(z):=K(z,a)$$
and
$$K_a^m(z):=\left.\frac{\dee^m}{\dee\bar w^m}K(z,w)\right|_{w=a}$$
as $a$ ranges over points in $\Om$ and $m$ ranges over all
nonnegative integers. As shown in \cite{B6}, we may find an
element of $\KK$
as close in $C^\infty(\Obar)$ to the function which is
identically one as desired. If such an element is sufficiently
close to one, it has an antiderivative $f(z)$ that is as close to $z$
in $C^\infty(\Obar)$ as desired such that $f$ is a conformal mapping
of $\Om$ one-to-one onto a nearby domain $\Om_2$. It is shown in
\cite{B6} that such a domain $\Om_2$ is an area quadrature domain,
which is therefore bounded by a smooth real algebraic curve by
results of Aharonov and Shapiro that we will soon review. With this
change of variables, we now focus on smooth real algebraic Jordan
curves that are the boundary of an area quadrature domain.

The history of the study of area quadrature domains in the plane
is long and glorious. The foundations of the subject go back to
those papers of Aharonov and Shapiro \cite{AS} and Gustafsson \cite{G}
(see also \cite{S}, \cite{SU}, and \cite{G2}). Many of the
basic theorems that we will need are proved in \cite[Chap.~22]{B}
using a similar philosophy to the techniques of this paper.

\section{The main results}
\label{sec3}

From now until we consider the multiply connected case, we
will suppose that $\gamma$ is a smooth real algebraic Jordan curve
that bounds an area quadrature domain $\Om$. Then the Schwarz
function $S(z)$ for $\gamma$ extends meromorphically to $\Om$
\cite{AS}. For $z$ in $\gamma$, let $T(z)$ be equal to the
complex unit tangent vector pointing in the direction of the
standard orientation for the boundary $b\Om$ of $\Om$.

The Dirichlet-to-Neumann (D-to-N) map takes a $C^\infty$ smooth
function on $\gamma$ to the normal derivative of its Poisson extension
to $\Om$. The following theorem is proved in \cite[Theorem~1.5]{B2}.

\begin{thm}
\label{thm3.1}
The D-to-N map associated to a bounded smooth simply
connected area quadrature domain $\Om$
maps $\RS$ {\it onto} $\KK T + \overline{\KK\ T}$. It is close
to being one-to-one in the sense that, if two functions map
to the same function, they must differ by a constant.
Furthermore, the elements of the Bergman span in the image
are uniquely determined.
\end{thm}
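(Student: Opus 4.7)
The plan is to translate the assertion into a statement about the holomorphic/antiholomorphic decomposition $U=h+\bar g$ of the harmonic extension of $R$, then to exploit the area-quadrature hypothesis via the Schwarz function and the Bergman reproducing property.

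First, for $R\in\RS$ with Poisson extension $U$ on $\Om$, I would fix $a_0\in\Om$ and write $U=h+\bar g$ uniquely with $h,g$ holomorphic on $\Om$ and $g(a_0)=0$. Since the outward unit normal on $\gamma$ is $N=-iT$, the complex form $\partial/\partial n=-iT\,\partial/\partial z+i\bar T\,\partial/\partial\bar z$ applied to $U$ gives
\[
\left.\frac{\partial U}{\partial n}\right|_\gamma = T\cdot(-ih'(z)) + \overline{T\cdot(-ig'(z))}.
\]
So Theorem~\ref{thm3.1} reduces to three claims: (i)~$h',g'\in\KK$ for every $R\in\RS$; (ii)~every pair $(\alpha,\beta)\in\KK\times\KK$ is realized as $(-ih',-ig')$ for some $R\in\RS$; (iii)~the stated uniqueness assertions.

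The main content is (i). Because $\Om$ is an area quadrature domain, $S$ extends meromorphically to $\Om$ with finitely many poles, so $F(z):=R(z,S(z))$ is meromorphic on $\Om$ with finitely many poles $p_1,\dots,p_k\in\Om$ (none on $\gamma$, since $R\in\RS$) and $F|_\gamma=R|_\gamma$. Decompose $F=H+\sum_j P_j$, where $H$ is the holomorphic remainder on $\Om$ (rational in $z$, with poles outside $\bar\Om$) and each $P_j$ is a finite linear combination of terms $(z-p_j)^{-m-1}$. The heart of the argument is the Bergman-kernel reflection formula: the Poisson extension of $(z-p_j)^{-m-1}\big|_\gamma$ is purely antiholomorphic on $\Om$, with antiholomorphic part $\bar g_{j,m}$ whose derivative $g_{j,m}'$ is a constant multiple of $K_{p_j}^{m}$. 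This can be verified explicitly on the disk by the geometric-series identity $1/(z-p)=\bar z/(1-p\bar z)$ on the unit circle, and in general by the reproducing identity combined with the rational structure of $K$ on area quadrature domains. Summing contributions identifies $h=H$ and $\bar g=\sum_j\bar g_{j,m}$; differentiation then exhibits $g'=\sum c_{j,m}K_{p_j}^{m}$ and, by the corresponding identification applied to $H'$, places both $h'$ and $g'$ in $\KK$.

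For (ii) I would reverse the construction: given $\alpha,\beta\in\KK$, primitives $h,g$ of $i\alpha,i\beta$ exist on the simply connected $\Om$ and are themselves rational (because, in the area quadrature setting, elements of $\KK$ are rational functions of $z$ whose poles lie outside $\bar\Om$ and whose residues vanish), so $U=h+\bar g$ restricts to $\gamma$ as an element of $\RS$ with the prescribed D-to-N image. For (iii), two $R$'s with equal D-to-N image give harmonic extensions differing by a function with zero normal derivative on $\gamma$, hence by a constant by Neumann uniqueness on the simply connected $\Om$; uniqueness of the Bergman-span pair $(\alpha,\beta)$ follows by running the construction backward, since $\alpha T+\overline{\beta T}\equiv0$ on $\gamma$ implies that the associated harmonic $U$ has zero normal derivative, is therefore constant, and forces $h'=g'=0$. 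The principal obstacle is step (i): producing the explicit antiholomorphic reflection formula that sends $(z-p_j)^{-m-1}$ boundary values to a multiple of $\overline{K_{p_j}^{m}}$ and verifying that the holomorphic remainder's derivative $H'$ lies in $\KK$. This is where the area-quadrature hypothesis does its real work, intertwining the meromorphic extension of $S$ with the rational structure of the Bergman kernel on such domains; once the reflection is in hand, the remaining pieces are formal consequences of the reproducing property of $K$.
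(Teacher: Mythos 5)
Your overall skeleton (the normal-derivative formula, the meromorphic extension $R(z,S(z))$, Neumann uniqueness for the injectivity statements) matches the paper, and your uniqueness argument in (iii) is fine --- in fact integrating $i\alpha,i\beta$ to get a harmonic $U$ with vanishing normal derivative is a clean substitute for the paper's ``no nonzero holomorphic $1$-form on a genus-zero double'' argument. But step (i), which you correctly identify as the heart of the matter, contains a genuine error rather than just an unfinished verification. The claimed reflection formula --- that the Poisson extension of $(z-p)^{-m-1}\big|_\gamma$ is \emph{purely} antiholomorphic --- holds only for the disc. For a general simply connected area quadrature domain the harmonic extension of $1/(z-p)$ has holomorphic part equal (up to an additive constant) to the regular part of $\pi\Lambda(\cdot,p)+ (z-p)^{-1}$-type expressions, which vanishes identically only when $\Lambda$ reduces to its principal part, i.e.\ only for the disc. (Concretely: push $1/(z-p)$ to the unit disc by a degree-two rational conformal map $\phi$ onto a nearby quadrature domain; $1/(\phi(w)-p)$ splits into a pole inside and a pole \emph{outside} the unit circle, and the outside pole produces a nonconstant holomorphic part.) Consequently your identification $h'=H'$ is false, and you have no argument that $\partial U/\partial z$ lies in $\KK$. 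The paper's resolution is a second, independent representation that you never invoke: besides $u=R(z,S(z))-\sum c\,G_a^m$, which shows $\partial u/\partial\bar z\in\overline{\KK}$, one also writes $u=R(\overline{S(z)},\bar z)-\sum c'\,G_a^{\overline m}$ using the \emph{antimeromorphic} extension of $r$, and reads off $\partial u/\partial z$ as a combination of $\frac{\partial}{\partial z}G_a^{\overline m}$, i.e.\ of the $K_a^m$ themselves. Without this conjugate extension (or an equivalent device) your step (i) does not close.

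There is a secondary gap in (ii). Elements of $\KK$ on an area quadrature domain are \emph{not} rational functions of $z$ with poles outside $\Obar$; they are algebraic functions whose boundary restrictions are rational in $z$ and $\bar z$ (and likewise your ``holomorphic remainder'' $H$ of $R(z,S(z))$ is algebraic, not rational in $z$). So the assertion that primitives of elements of $\KK$ automatically lie in $\RS$ is not just unproved but rests on a false description of $\KK$. The paper's surjectivity argument is precisely Lemma~\ref{lem3.1}: the $1$-form $K_a^m\,dz$ extends to the double as a residue-free meromorphic form via $-\overline{\Lambda_a^m}\,d\bar z$, hence has a single-valued meromorphic primitive $k_a^m$ on the double, and Gustafsson's primitive-pair argument ($z$ and $S(z)$ generate the function field of the double) shows that any such meromorphic function restricts to $\gamma$ as a rational function of $z$ and $\bar z$. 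Some version of this extension-to-the-double step is unavoidable and is missing from your proposal.
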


We remark that, sometimes the D-to-N map is viewed, not as
$\phi$ to harmonic extension $u$ to normal derivative
$\frac{\dee u}{\dee n}$, but as tangential derivative
$\frac{\dee \phi}{\dee s}=\frac{\dee u}{\dee s}$ to
normal derivative $\frac{\dee u}{\dee n}$. In this setting,
the D-to-N map is one-to-one.

We will give a new streamlined proof of Theorem~\ref{thm3.1}
here, but first, some old fashioned mathematical plagiarism.
See \cite{Berg} or \cite{B} for more detailed descriptions of
this quick review of classical formulas.

The Green's function $G(z,a)$ associated to $\Om$ is the
harmonic function of $z$ on $\Om-\{a\}$ that vanishes on the
boundary and is such that $G(z,a)+\ln|z-a|$ has a removable
singularity at $a\in\Om$. The Bergman kernel and the Green's
function are related via
$$K(z,w)=-\frac{2}{\pi}\frac{\dee^2G(z,w)}{\dee z\dee\bar w},$$
and the complementary kernel to the Bergman kernel is defined
via
$$\Lambda(z,w)=-\frac{2}{\pi}\frac{\dee^2G(z,w)}{\dee z\dee w}.$$
The Bergman kernel is $C^\infty$ smooth on
$\Obar\times\Obar$ minus the boundary diagonal and
$\Lambda(z,w)$ is $C^\infty$ smooth on
$\Obar\times\Obar$ minus the diagonal.
The Bergman kernel is holomorphic in $z$ and antiholomorphic
in $w$; $\Lambda(z,w)$ is holomorphic in both variables on
$\Om\times\Om$ minus the diagonal and has a double pole
as a function of $z$ at $z=w$ with principal part
$-\frac{1}{\pi}(z-w)^{-2}$ there. Furthermore, $\Lambda(z,w)=
\Lambda(w,z)$, and
\begin{equation}
\label{KL}
K(z,w)T(z)=-\overline{\Lambda(z,w)}\,\overline{T(z)}
\end{equation}
when $z$ is in the boundary and $w\ne z$.

We will use notation similar to the Bergman span notation
to denote $\Lambda_a^0(z)=\Lambda(z,a)$ and
$$\Lambda_a^m(z)
:=\left.\frac{\dee^m}{\dee w^m}\Lambda(z,w)\right|_{w=a},$$
and denote $G_a^0(z)=G(z,a)$ and
$$G_a^m(z)
:=\left.\frac{\dee^m}{\dee w^m}G(z,w)\right|_{w=a},$$
and
$$G_a^{\overline{m}}(z)
:=\left.\frac{\dee^m}{\dee\bar w^m}G(z,w)\right|_{w=a}.$$
Note that $G_a^m(z)$ has a pole of order $m$ at $z=a$ as
a function of $z$ and that
$\frac{\dee}{\dee\bar z}G_a^m(z)$ is a nonzero complex
constant times the conjugate of $K_a^m(z)$ and
$\frac{\dee}{\dee z}G_a^{\overline{m}}(z)$ is a nonzero complex
constant times $K_a^m(z)$. Similarly,
$\frac{\dee}{\dee z}G_a^m(z)$ is a nonzero complex
constant times $\Lambda_a^m(z)$ and
$\frac{\dee}{\dee\bar z}G_a^{\overline{m}}(z)$ is a nonzero complex
constant times the conjugate of $\Lambda_a^m(z)$.
Note also that differentiating (\ref{KL}) with respect to
$\bar w$ yields that
\begin{equation}
\label{KL_m}
K_w^m(z)T(z)=-\overline{\Lambda_w^m(z)}\,\overline{T(z)}
\end{equation}
when $z$ is in the boundary and $w\ne z$.

For the future, let ${\mathbf\Lambda}$ denote the complex
linear span of the functions $\Lambda_a^m$ as $a$ ranges
over $\Om$ and $m$ ranges over all nonnegative integers.

\begin{proof}[Proof of Theorem~\ref{thm3.1}]
If $u$ is a harmonic function on $\Om$ that
extends smoothly up to an open curve segment on
the boundary, $u$ can be written locally there as
$h+\overline{H}$, where $h$ and $H$ are holomorphic
functions that are smooth up to the segment. The normal
derivative of the holomorphic function $h$ along the
outward pointing normal direction at a point $z$ in
the segment is $-i\,h'(z)T(z)$. Similarly, the normal
derivative of $\overline{H}$ is
$i\,\overline{H'(z)}\,\overline{T(z)}$.
Since $h'(z)=\dee u/\dee z$ and $\overline{H'(z)}=
\dee u/\dee\bar z$, we conclude that the normal
derivative of a harmonic function $u$ is given by
\begin{equation}
\label{normal}
\frac{\dee u}{\dee n}=
-i\,\frac{\dee u}{\dee z}\,T(z)+
i\,\frac{\dee u}{\dee\bar z}\,\overline{T(z)}.
\end{equation}

Given a rational function $R(z,\bar z)$ representing
a function $r(z)$ in $\RS$, let $u(z)$ denote its harmonic
extension to $\Om$. The function $R(z,S(z))$ extends
$r(z)$ to $\Om$ as a meromorphic function and the
harmonic extension $u$ is equal to $R(z,S(z))$ minus
a finite linear combination of derivatives of the
Green's function $G_{a}^{m}(z)$ at points in $a$
in $\Om$ chosen to subtract off the principal parts
of the poles inside $\Om$.
Note that $\frac{\dee u}{\dee\bar z}$ is therefore
seen to be a finite linear combination of functions
$\frac{\dee}{\dee\bar z}G_{a}^{m}(z)$, which is
a linear combination of the conjugates of the functions
$K_{a}^m(z)$. Similarly,
$R(\,\overline{S(z)},\bar z)$ extends $r(z)$ as an
antimeromorphic function in $\Om$ and subtracting off
a linear combination of functions
$G_a^{\overline{m}}(z)$ to eliminate the poles yields
that $\frac{\dee u}{\dee z}$ is a linear combination
of $\frac{\dee}{\dee z}G_a^{\overline{m}}(z)$, which is
a linear combination of $K_a^m(z)$. Now formula
(\ref{normal}) yields the promised decomposition of the
normal derivative of $u$ in terms of the Bergman span.

Note that everything we have done so far is valid in
the multiply connected setting. We now use the
assumption that $\Om$ is simply connected to pin
down the uniqueness of the terms in the expression
involving the Bergman span. If $\kappa_1$ and $\kappa_2$
are two elements of the Bergman span such that
$\kappa_1 T=\overline{\kappa_2 T}$ on the boundary,
then $\kappa_1 dz$ extends to the double via extension
by $\overline{\kappa_2}\, d\bar z$ on the reflected
side. The only holomorphic $1$-form on a Riemann
surface of genus zero is the zero form. Hence
$\kappa_1$ and $\kappa_2$ are both zero on $\Om$.
This is only possible if all the coefficients in
the descriptions of the elements in the Bergman
span are also zero. We will complete the proof when
we show that the D-to-N map takes $\RS$ {\it onto}
the image space, which we will do by means of the
lemma soon to follow.
\end{proof}

In a moment, we will define $k_a^m(z)$ to be a
certain complex antiderivative of $K_a^m(z)$, and
we will see that it has some remarkable properties in
our present context. Let $\Ohat$ denote
the double of $\Om$ and let $\Ot$ denote the reflection
of $\Om$ in $\Ohat$. If $a$ is a point in $\Om$, let
$\tilde a$ denote the point in $\Ot$ represented by
the reflection of $a$ in $\Ohat$.

\begin{lem}
\label{lem3.1}
Suppose $\Om$ is a simply connected smooth area quadrature
domain and $a\in\Om$. A complex antiderivative
$k_a^m(z)$ of $K_a^m(z)$ on $\Om$ extends to the double as
a meromorphic function with a pole of order $m+1$ at the
reflection $\tilde a$ of $a$ in the reflected side of
$\Om$ in the double. Furthermore,
$k_a^m(z)$ is an algebraic function whose restriction
to the boundary of $\Om$ is a rational function of $z$
and $\bar z$ in $\RS$. The normal derivative of
$k_a^m(z)$ is $-iK_a^m(z)T(z)$.
Similarly, the normal derivative of the conjugate of
$k_a^m(z)$ is $i\,\overline{K_a^m(z)T(z)}$.
Consequently, the D-to-N map takes $\RS$ {\it onto}
$\KK T + \overline{\KK\ T}$.
\end{lem}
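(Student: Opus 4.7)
The plan is to construct $k_a^m$ as a holomorphic antiderivative of $K_a^m$ on the simply connected $\Om$ and then use the boundary identity (\ref{KL_m}) to match it against an antimeromorphic function across $b\Om$, thereby extending it meromorphically to $\Ohat$. To that end I would simultaneously form a single-valued antiderivative $\ell_a^m$ of $\Lambda_a^m$ on $\Om$; its existence requires $\Lambda_a^m$ to have vanishing residue at $z=a$, which follows from the fact that its principal part is a constant multiple of $(z-a)^{-(m+2)}$, and the antiderivative $\ell_a^m$ then carries a single pole of order $m+1$ at $a$. Parametrizing $b\Om$ by arc length $s$ so that $dz/ds=T(z)$, identity (\ref{KL_m}) yields
\[
\frac{d}{ds}\bigl(k_a^m(z)+\overline{\ell_a^m(z)}\bigr)=K_a^m(z)T(z)+\overline{\Lambda_a^m(z)T(z)}=0.
\]
Since $b\Om$ is connected, there is a single constant $C$ with $k_a^m+\overline{\ell_a^m}\equiv C$ on $b\Om$.

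I would read this as a gluing condition on the double: assigning the value $C-\overline{\ell_a^m(z)}$ to the reflected point corresponding to $z$ produces a single meromorphic function on $\Ohat$, holomorphic away from $\tilde a$ and inheriting a pole of order $m+1$ at $\tilde a$ from $\ell_a^m$. Simple connectivity forces $\Ohat$ to have genus zero, so $k_a^m$ is rational on $\Ohat$. To extract the rational expression in $z$ and $\bar z$ on $b\Om$, I would realize $\Ohat$ as the algebraic curve $\{(z,w):P(z,w)=0\}$ in $\C^2$ using the Schwarz function---the $\Om$-side parametrized by $z\mapsto(z,S(z))$ and the reflected side by the corresponding antiholomorphic parametrization---so that $k_a^m$ becomes a rational function of $(z,w)$ on this curve. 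On $b\Om$ one has $w=S(z)=\bar z$, yielding $k_a^m|_{b\Om}$ as a rational function of $z$ and $\bar z$; $C^\infty$ smoothness up to $b\Om$ inherited from $K_a^m$ places the restriction in $\RS$.

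For the normal-derivative statements, applying the identity $\dee h/\dee n=-ih'(z)T(z)$ for holomorphic $h$ (used in the proof of Theorem~\ref{thm3.1}) to $h=k_a^m$ gives $\dee k_a^m/\dee n=-iK_a^m(z)T(z)$, and its antiholomorphic analogue gives $\dee\overline{k_a^m}/\dee n=i\overline{K_a^m(z)T(z)}$. For the surjectivity claim, pick arbitrary $\kappa_1=\sum_j c_j K_{a_j}^{m_j}$ and $\kappa_2=\sum_j d_j K_{b_j}^{n_j}$ in $\KK$, and consider the complex-valued harmonic function
\[
u=i\sum_j c_j\,k_{a_j}^{m_j}(z)-i\sum_j \bar d_j\,\overline{k_{b_j}^{n_j}(z)}
\]
on $\Om$. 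Its boundary values lie in $\RS$ (which is closed under complex conjugation and complex scalar multiplication) by the rationality just established, and a direct computation using the two normal-derivative formulas above gives $\dee u/\dee n=\kappa_1 T+\overline{\kappa_2 T}$, so every element of $\KK T+\overline{\KK T}$ is hit.

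The main obstacle will be the second paragraph: passing from the statement that $k_a^m$ is meromorphic on the genus-zero compact Riemann surface $\Ohat$ to the concrete statement $k_a^m|_{b\Om}\in\RS$. This requires realizing the double of the area quadrature domain as an algebraic curve in $\C^2$ via the Schwarz function and tracking how rationality in the two coordinates $(z,w)$ specializes to rationality in $z$ and $\bar z$ on $b\Om$. By comparison, the residue computation producing $\ell_a^m$ and the verification of surjectivity from the explicit $u$ are routine bookkeeping.
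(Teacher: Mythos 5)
Your proposal is correct and follows essentially the same route as the paper: extension to the double via identity (\ref{KL_m}), the residue-free order-$(m+2)$ pole, genus zero, Gustafsson's primitive-pair realization of the double as an algebraic curve for the rationality of the boundary values, and the explicit normal-derivative formulas for surjectivity. The only difference is mechanical---you integrate $K_a^m$ and $\Lambda_a^m$ separately on $\Om$ and glue the antiderivatives along $b\Om$ (your $\ell_a^m$ is the paper's $\lambda_a^m$ up to normalization, cf.\ the relation $k_a^m=-\overline{\lambda_a^m}$ on $b\Om$), whereas the paper extends the $1$-form $K_a^m\,dz$ to the double first and then integrates there.
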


There are various ways of understanding this lemma,
several of which are explored in \cite{B2} in a
meandering style. Perhaps the best way to explain
it is to note that formula (\ref{KL}) reveals that
the holomorphic $1$-form $K_a^0(z)\,dz$ extends to
the double as a meromorphic $1$-form $\kappa_a^0$
via extension by the conjugate of
$-\Lambda_a^0(z)\,dz$ on the reflected side.
This $1$-form has a single residue free double pole at
the point $\tilde a$ on the reflected side $\Ot$. If
we fix a point $b$ in $\Om$, then the line integral
$$h(z)=\int_{\gamma_b^z}\kappa_a^0$$
along a curve $\gamma_b^z$ connecting $b$ to
a point $z$ in the double that avoids $\tilde a$
is well defined and defines a single valued
holomorphic function on the double minus $\tilde a$
with a simple pole at~$\tilde a$. Indeed, $\kappa_a^0=dh$.
We define the function $k_a^0$ to be the restriction
of $h$ to $\Obar$. (There will be occasion to think of
$k_a^0$ as equal to the meromorphic function $h$ on $\Ohat$
later in the paper.)

Similarly, (\ref{KL_m}) reveals that
the holomorphic $1$-form $K_a^m(z)\,dz$ extends to
the double as a meromorphic $1$-form $\kappa_a^m$
via extension by the conjugate of
$-\Lambda_a^m(z)\,dz$ on the reflected side.
This $1$-form has a single residue free pole of
order $m+2$ at $\tilde a$ in the reflected side.
Therefore the line integral
\begin{equation}
\label{kam}
h(z)=\int_{\gamma_b^z}\kappa_a^m
\end{equation}
along a curve $\gamma_b^z$ as above is well defined and
defines a single valued holomorphic function on the double
minus $\tilde a$ with a pole of order $m+1$ at $\tilde a$.
Let $k_a^m$ be the restriction of this function to $\Obar$.
(As with $k_a^0$, we will later consider $k_a^m$ to be
given by the meromorphic function $h$ on $\Ohat$.)

We note that we have defined the functions $k_a^m$ on
$\Om$ via
$$k_a^m(z)=
\int_{\gamma_b^z}K_a^m(w)\ dw,$$
and $k_a^m$ are therefore seen to be homorphic in $z$ and
antiholomorphic in $a$ on $\Om$, and normalized by the
property that they all vanish at $b$.

That restrictions to the boundary of $\Om$ of meromorphic
functions on the double of an area quadrature domain are
rational functions of $z$ and $\bar z$ was shown by
Bj\"orn Gustafsson in \cite{G}. We describe his argument
here. Since $S(z)=\bar z$ on the boundary,
the function $S(z)$ extends to the double of $\Om$ as
a meromorphic function $G_1$ via extension by the function
$\bar z$ on the reflected side. The conjugate identity,
$z=\overline{S(z)}$ on $b\Om$, yields that the function
$z$ extends to the double as a meromorphic function $G_2$
via extension by the function $\overline{S(z)}$ on the
reflected side. By taking a point $p$ sufficiently close
to the point at infinity in the Riemann sphere, we may
arrange that $G_1^{-1}(p)$ consists of distinct points
of multiplicity one inside $\Om$. Since $G_2(z)$ separates
these points, $G_1$ and $G_2$ form a primitive pair for
the double, and therefore generate the field of meromorphic
functions (see Farkas and Kra \cite[p.~249]{FK}). Hence,
meromorphic functions on the double are rational
combinations $G_1$ and $G_2$, and hence, functions of
$z$ and $\bar z$ when restricted to the boundary of
$\Om$. Primitive pairs are always algebraicly
dependent, and hence there is an irreducible polynomial
$P(z,w)$ such that $P(z,S(z))\equiv0$ on $\Om$. Hence,
$S(z)$ is an algebraic function and all meromorphic
functions on $\Om$ that extend meromorphically to the
double are algebraic. It also follows that $b\Om$ is
in the set where $P(z,\bar z)=0$, and Gustafsson explores
the nature of this algebraic curve further in \cite{G}.
This completes the proof of the lemma.

Let $\mathbf k$ denote the complex linear span of
all the functions $k_a^m$ defined by the integral
(\ref{kam}) as $a$ ranges over points in $\Om$ and
$m$ ranges over all nonnegative integers.

The following theorem now follows by noting that the
normal derivative of $k_a^m$ is $-iK_a^mT$ and that
two functions on the boundary that map to the same
function via the D-to-N map must differ by a constant.

\begin{thm}
\label{thm3.2}
On a smooth simply connected area quadrature domain
$\Om$, a rational function $R(z,\bar z)$ in $\RS$ can be
decomposed as a constant plus a uniquely determined
element of $\mathbf k+\overline{\mathbf k}$. Thus,
$$\RS=\C+\mathbf k+\overline{\mathbf k}.$$
\end{thm}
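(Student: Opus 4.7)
The plan is to assemble Theorem~\ref{thm3.1} and Lemma~\ref{lem3.1} into a short argument. The decisive facts are that each $k_a^m$ lies in $\RS$, that its normal derivative equals $-iK_a^m T$, and that the kernel of the D-to-N map on $\RS$ consists only of constants.

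First I would check the easy inclusion $\C+\mathbf k+\overline{\mathbf k}\subset\RS$. Lemma~\ref{lem3.1} states that each $k_a^m$ restricts to $\gamma$ as a rational function of $z$ and $\bar z$ without singularities on $\gamma$, so the elements of $\mathbf k$ (and, by taking conjugates, of $\overline{\mathbf k}$) all lie in $\RS$. Since $\RS$ is a complex linear space containing the constants, the inclusion is immediate.

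For the reverse inclusion, given $R\in\RS$, use the surjectivity in Theorem~\ref{thm3.1} to write the D-to-N image of $R$ as $\kappa_1 T+\overline{\kappa_2 T}$ with $\kappa_j\in\KK$, say $\kappa_j=\sum_i c_{ij}K_{a_{ij}}^{m_{ij}}$. Form the corresponding combinations $\tilde k_j=\sum_i c_{ij}k_{a_{ij}}^{m_{ij}}\in\mathbf k$ and set $u=i\,\tilde k_1-i\,\overline{\tilde k_2}\in\mathbf k+\overline{\mathbf k}$. Applying Lemma~\ref{lem3.1} term by term shows that the normal derivative of $u$ equals exactly $\kappa_1 T+\overline{\kappa_2 T}$, matching that of $R$. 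The near-injectivity clause of Theorem~\ref{thm3.1} then forces $R-u$ to be a constant $c$, yielding $R=c+u$.

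For uniqueness, suppose $c+v=c'+v'$ with $v,v'\in\mathbf k+\overline{\mathbf k}$. Then $w:=v-v'$ equals the constant $c'-c$, so its D-to-N image is zero. Writing $w=\sum_i a_i k_{a_i}^{m_i}+\overline{\sum_j b_j k_{a_j}^{m_j}}$, this image is $-i\bigl(\sum_i a_i K_{a_i}^{m_i}\bigr)T+i\,\overline{\bigl(\sum_j b_j K_{a_j}^{m_j}\bigr)T}$; the Bergman-span uniqueness clause of Theorem~\ref{thm3.1} forces both $\sum_i a_i K_{a_i}^{m_i}$ and $\sum_j b_j K_{a_j}^{m_j}$ to vanish identically on $\Om$. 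Since each $k_a^m$ is the antiderivative of $K_a^m$ normalized by~(\ref{kam}) to vanish at the base point $b$, any linear combination whose derivative vanishes identically must itself be zero; hence both halves of $w$ vanish, so $v=v'$ and then $c=c'$. The substantive work has all been front-loaded into Theorem~\ref{thm3.1} and Lemma~\ref{lem3.1}; the only remaining obstacle is the bookkeeping with the normalization at $b$, which rules out a constant ambiguity inside $\mathbf k+\overline{\mathbf k}$.
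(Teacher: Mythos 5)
Your proposal is correct and follows essentially the same route as the paper, which deduces Theorem~\ref{thm3.2} directly from Theorem~\ref{thm3.1} and Lemma~\ref{lem3.1} by matching normal derivatives of $k_a^m$ (namely $-iK_a^mT$) with the Bergman-span decomposition of the D-to-N image and invoking the constant-kernel and uniqueness clauses. Your added bookkeeping with the normalization $k_a^m(b)=0$ to pin down uniqueness within $\mathbf k+\overline{\mathbf k}$ is a correct filling-in of a detail the paper leaves implicit.
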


It is interesting to note that the complex polynomials
are in the Bergman span associated to an area quadrature
domain (see \cite[p.~119]{B}). Hence, we deduce that 
$\mathbf k$ contains all antiderivatives of complex
polynomials that vanish at $b$, i.e., all polynomials that
vanish at $b$. Furthermore, it can be shown that
all complex rational functions with residue free poles
outside $\Obar$ that vanish at $b$ are also in $\mathbf k$.

With this theorem behind us, we can compare it to
the decomposition of rational functions of $z$ and $\bar z$
restricted to the unit circle we described as our motivator
early on and see that they are one and the same result!

Another pleasant moment arises when we realize that
Theorem~(\ref{thm3.2}) gives a solution to the Dirichlet
problem in terms of the Bergman kernel on a smooth simply
connected area quadrature domain when the boundary data
is real and rational. A silly way to solve the Dirichlet
problem would be to solve it with real rational data,
appeal to the fact that real rational functions on
the boundary are an algebra that separates points and
are therefore dense among continuous functions, then
use sup norm estimates to get general solutions
from special solutions. Going back to such foundational
considerations is a hobby of the author. See \cite{BR}
for what happens when one allows oneself to get carried away
with such thoughts.

We remark here that all the arguments we have used
up to this point can be repeated using the kernel
$\Lambda(z,w)$ in place of $K(z,w)$ to yield the
following theorems. Recall that $\mathbf\Lambda$ denotes
the complex linear span of the functions $\Lambda_a^m$ as
$a$ ranges over $\Om$ and $m$ ranges over all nonnegative
integers.

\begin{thm}
\label{thm3.3}
The D-to-N map associated to a smooth area quadrature domain
maps $\RS$ {\it onto}
${\mathbf\Lambda} T + \overline{{\mathbf\Lambda}\ T}$. It is close
to being one-to-one in the sense that, if two functions map
to the same function, they must differ by a constant.
Furthermore, the elements of the span $\mathbf\Lambda$ of the
complementary kernel to the Bergman kernel in the image are
uniquely determined.
\end{thm}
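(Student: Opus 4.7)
The plan is to mirror the proof of Theorem~\ref{thm3.1} with $\Lambda$ replacing $K$ throughout, using the boundary identity (\ref{KL_m}) as the dictionary between the two. Writing $K_a^m T = -\overline{\Lambda_a^m T}$ on $b\Om$ and passing to finite linear combinations shows that on $b\Om$
\[ \KK\, T = -\overline{{\mathbf\Lambda}\, T}, \qquad \overline{\KK\, T} = -{\mathbf\Lambda}\, T, \]
so ${\mathbf\Lambda}\, T + \overline{{\mathbf\Lambda}\, T}$ coincides with $\KK\, T + \overline{\KK\, T}$ as a subspace of $C^\infty$ functions on the boundary. The image statement of Theorem~\ref{thm3.3} is therefore a restatement of the image statement of Theorem~\ref{thm3.1}, and the ``close to one-to-one'' clause is identical since the D-to-N map on $\RS$ is unchanged.

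For the uniqueness of the ${\mathbf\Lambda}$-summands, suppose $\lambda_1 T + \overline{\lambda_2 T} = \mu_1 T + \overline{\mu_2 T}$ with $\lambda_i,\mu_i\in {\mathbf\Lambda}$. Set $\lambda := \lambda_1 - \mu_1$ and $\lambda' := \mu_2 - \lambda_2$, so that $\lambda T = \overline{\lambda' T}$ on $b\Om$. Choose finite representations $\lambda = \sum_j c_j \Lambda_{a_j}^{m_j}$ and $\lambda' = \sum_j d_j \Lambda_{b_j}^{n_j}$, and form the associated elements $\kappa := \sum_j \bar c_j K_{a_j}^{m_j}$ and $\kappa' := \sum_j \bar d_j K_{b_j}^{n_j}$ of $\KK$. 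Identity (\ref{KL_m}) converts the relation $\lambda T = \overline{\lambda' T}$ into $\kappa' T = \overline{\kappa T}$; the uniqueness already established in Theorem~\ref{thm3.1} forces $\kappa \equiv \kappa' \equiv 0$, and re-applying (\ref{KL_m}) yields $\lambda T \equiv \lambda' T \equiv 0$ on $b\Om$. Since $\lambda$ and $\lambda'$ are meromorphic on $\Om$ and holomorphic in a collared neighborhood of $b\Om$ from the inside, vanishing on $b\Om$ forces them to vanish throughout $\Om$.

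The step most deserving of care is this last implication in the uniqueness argument. The passage $\sum c_j \Lambda_{a_j}^{m_j}\leftrightarrow \sum \bar c_j K_{a_j}^{m_j}$ is defined at the level of fixed finite expansions rather than as a canonical map of abstract vector spaces, so one must phrase the uniqueness of Theorem~\ref{thm3.1} as uniqueness of the \emph{functions} in $\KK$ (not of coefficient tuples) and then invoke the identity theorem for meromorphic functions to pass from $\KK$-vanishing back to ${\mathbf\Lambda}$-vanishing. Once this is in place, Theorem~\ref{thm3.3} is essentially a corollary of Theorem~\ref{thm3.1} together with the boundary identity (\ref{KL_m}).
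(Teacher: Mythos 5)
Your proposal is correct and matches the paper's own treatment: the paper proves Theorem~\ref{thm3.3} by remarking that all the earlier arguments go through with $\Lambda$ in place of $K$, and explicitly notes that the theorem ``could also be deduced from Theorem~\ref{thm3.1} by merely applying the identity (\ref{KL_m}) to the terms in the decomposition,'' which is precisely the route you take. Your added care about phrasing the uniqueness at the level of functions in $\KK$ rather than coefficient tuples, and then passing back to ${\mathbf\Lambda}$ via the identity theorem, is a sound filling-in of the detail the paper leaves implicit.
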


Actually, Theorem~(\ref{thm3.3}) could also be deduced from
Theorem~(\ref{thm3.1}) by merely applying the identity
(\ref{KL_m}) to the terms in the decomposition.

The same reasoning that led to the definitions of the
antiderivatives $k_a^m$ yield that antiderivatives of
$\Lambda_a^m$ exist and have special properties. Indeed,
let $\eta_a^m$ be the
meromorphic $1$-form on $\Ohat$ defined as
$\Lambda_a^m(z)\, dz$ on $\Om$ and as
$-\overline{K_a^m(z)}\,d\bar z$ on $\Ot$, and define
\begin{equation}
\label{lam}
h(z)=\int_{\gamma_{\tilde b}^z}\eta_a^m,
\end{equation}
where $\tilde b$ is the reflection of the point $b$
used in equation (\ref{kam}) and $\gamma_{\tilde b}^z$ is
a curve starting at $\tilde b$ in $\Ohat$ and ending at
$z$ which avoids the point $a$. Define $\lambda_a^m$ to
be the restriction of $h$ to $\Obar$ and let
$\blambda$ denote the complex linear span of $\lambda_a^m$
as $a$ ranges over $\Om$ and $m$ ranges over all
nonnegative integers. The next theorem follows by the
same reasoning we applied to the $k_a^m$.

\begin{thm}
\label{thm3.4}
On a smooth area quadrature domain $\Om$,
a rational function $R(z,\bar z)$ in $\RS$ can be
decomposed as a constant plus a uniquely determined
element of
\boldmath
$$\lambda+\overline{\lambda}.$$
\unboldmath
\end{thm}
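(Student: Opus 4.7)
The plan is to mirror the proof of Theorem~\ref{thm3.2}, with Theorem~\ref{thm3.3} replacing Theorem~\ref{thm3.1} and a suitable analog of Lemma~\ref{lem3.1} for the $\lambda_a^m$ replacing that lemma. First I would verify that the integral (\ref{lam}) defines a single-valued meromorphic function on $\Ohat$. Since $\Om$ is simply connected, $\Ohat$ has genus zero, and the form $\eta_a^m$ is residue free (the principal part of $\Lambda$ at the diagonal is $-\frac{1}{\pi}(z-w)^{-2}$ together with its $w$-derivatives, and $\eta_a^m$ is holomorphic on $\Ot$); a residue-free meromorphic $1$-form on a surface of genus zero is exact, so $\lambda_a^m$ is a well-defined meromorphic function on $\Ohat$ whose only pole is at $a\in\Om$ and has order $m+1$.

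The key step is the analog of Lemma~\ref{lem3.1}: I would compute the D-to-N image of $\lambda_a^m|_{b\Om}$. In the $k_a^m$ case the pole lay at $\tilde a\in\Ot$, so $k_a^m$ was holomorphic on $\Obar$ and was its own harmonic extension, yielding normal derivative $-iK_a^m T$ directly from (\ref{normal}). Here the situation is dual: $\lambda_a^m$ has its pole in $\Om$ but is holomorphic on $\overline{\Ot}$. Composition with the antiholomorphic reflection $\sigma:\Om\to\Ot$ produces $u:=\lambda_a^m\circ\sigma$, which is antiholomorphic on $\Om$ and agrees with $\lambda_a^m$ along $b\Om$ (since $\sigma$ fixes the boundary pointwise), so by uniqueness for the Dirichlet problem $u$ is the Poisson extension. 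Antiholomorphicity gives $\dee u/\dee z=0$, and the boundary relation $\eta_a^m|_{b\Om}=-\overline{K_a^m(z)}\,d\bar z$ forces $\dee u/\dee\bar z=-\overline{K_a^m(z)}$ on $\Om$ (both sides are antiholomorphic on $\Om$ and agree on $b\Om$). Substituting into (\ref{normal}) and applying (\ref{KL_m}) in the rearranged form $\overline{K_a^m(z)}\,\overline{T(z)}=-\Lambda_a^m(z) T(z)$ along the boundary yields $\dee u/\dee n = i\Lambda_a^m T$. Likewise the D-to-N map sends $\overline{\lambda_a^m}|_{b\Om}$ to $-i\,\overline{\Lambda_a^m T}$.

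The rest now parallels Theorem~\ref{thm3.2} exactly. Given $R\in\RS$, Theorem~\ref{thm3.3} expresses the D-to-N image of $R$ uniquely as $\sum c_i\Lambda_{a_i}^{m_i}T + \overline{\sum d_j\Lambda_{b_j}^{n_j}T}$; setting $\Phi:=-i\sum c_i\lambda_{a_i}^{m_i} + i\,\overline{\sum d_j\lambda_{b_j}^{n_j}}\in\blambda+\overline{\blambda}$ matches that image by linearity, so near-injectivity in Theorem~\ref{thm3.3} gives $R = c + \Phi$ for a constant $c$. For uniqueness, the difference of any two such decompositions lies in $\blambda+\overline{\blambda}$ and is a constant, hence has zero D-to-N image; the uniqueness clause of Theorem~\ref{thm3.3} together with linear independence of the $\Lambda_a^m$ (distinct pole locations and orders) forces all coefficients in the $\lambda$-expansions to vanish, so the constants must agree.

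I expect the second step to be the main obstacle. The pole of $\lambda_a^m$ inside $\Om$ defeats the naive ``$\lambda_a^m$ is its own harmonic extension'' argument; the resolution---recognizing that $\lambda_a^m$ being holomorphic on $\overline{\Ot}$ forces the harmonic extension on $\Om$ to be antiholomorphic rather than holomorphic---is the one genuinely new ingredient beyond the $k_a^m$ case, and it is exactly what produces the clean normal-derivative formula $i\Lambda_a^m T$ in place of $-iK_a^m T$.
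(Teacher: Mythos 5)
Your proof is correct and follows the paper's route: the paper disposes of Theorem~\ref{thm3.4} with the single remark that it ``follows by the same reasoning we applied to the $k_a^m$,'' i.e.\ Theorem~\ref{thm3.3} together with the $\Lambda$-analogue of Lemma~\ref{lem3.1} (stated as Lemma~\ref{lem3.2}), exactly the scheme you lay out. The one substantive point where you go beyond the paper is the step you flag yourself: since $\lambda_a^m$ has its pole \emph{inside} $\Om$, the Poisson extension of $\lambda_a^m|_{b\Om}$ is not $\lambda_a^m$ but its pullback from $\Ot$ through the reflection, an antiholomorphic function $u$ with $\dee u/\dee z=0$ and $\dee u/\dee\bar z=-\overline{K_a^m}$, and your resulting D-to-N image $i\Lambda_a^m T$ is the correct one --- on the disc, $\lambda_a^0(z)=\tfrac{1}{\pi(a-z)}$ has harmonic extension $\bar z/(\pi(a\bar z-1))$ and one checks directly that its normal derivative is $i\Lambda_a^0T$. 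This differs by a sign from the assertion in Lemma~\ref{lem3.2} that ``the normal derivative of $\lambda_a^m$ is $-i\Lambda_a^m T$''; that formula is the normal derivative $-i(\lambda_a^m)'T$ of the meromorphic function $\lambda_a^m$ itself, which is not the Poisson extension of its boundary values. Because $\blambda$ is a complex linear span, the sign is immaterial to surjectivity and to the decomposition, so both arguments reach the same conclusion; yours simply makes explicit (and corrects the sign in) the point that the paper's one-line proof glosses over.
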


It is interesting to note that, by the way we have
defined them via (\ref{kam}) and (\ref{lam}),
$k_a^m(z)=-\overline{\lambda_a^m(z)}$ on the boundary,
and this is another way to see that both functions
extend meromorphically to the double. Also, the
argument principle applied to the boundary relation
yields that the one zero of $k_a^0$ at $b$
balances the one pole of $\lambda_a^0$ at
$a$ and, because zeroes of $k_a^0$ get counted
positively on the left and zeroes of $\lambda_a^0$
count negatively on right, there can be no other zeroes
of $k_a^0$ in $\Obar$ and no zeroes of
$\lambda_a^0$ in $\Obar$. Consequently,
$k_a^0/\lambda_a^0$ is seen to be equal to a
unimodular constant times the product of the Riemann
map that maps $b$ to the origin and the Riemann map that
maps $a$ to the origin, and $k_b^0/\lambda_b^0$ is a
unimodular constant times the square of the Riemann
map that maps $b$ to the origin. It is not hard to use
the transformation formula for the Bergman kernel under
conformal mappings and the formula for the Bergman kernel
of the unit disc to see that
$$k_a^0(z)=\frac{f(z)\overline{f'(a)}}{\pi(1-f(z)\,\overline{f(a)})},$$
where $f$ is the Riemann map mapping $\Om$ one-to-one onto
the unit disc with $f(b)=0$ and $f'(b)>0$. Similarly,
$$\lambda_a^0(z)=\frac{f'(a)}{\pi(f(a)-f(z))}.$$

Theorem~\ref{thm3.3} and~\ref{thm3.4} are to the previous
two as choosing poles inside the unit circle is to choosing
them outside in the motivator decomposition of restrictions
of rational function to the unit disc.

Lemma~(\ref{lem3.1}) also carries over naturally.

\begin{lem}
\label{lem3.2}
On a simply connected smooth area quadrature domain
$\Om$, a complex antiderivative
$\lambda_a^m(z)$ of $\Lambda_a^m(z)$ on $\Om$ extends to the double as
a meromorphic function. It is holomorphic on the reflected
side and has a pole of order $m+1$ at $z=a$.
Furthermore,
$\lambda_a^m(z)$ is an algebraic function whose restriction
to the boundary of $\Om$ is a rational function of $z$
and $\bar z$. Finally, the normal derivative of
$\lambda_a^m(z)$ is $-i\Lambda_a^m(z)T(z)$ and so the D-to-N map
takes $\RS$ {\it onto} ${\mathbf\Lambda} T +
\overline{{\mathbf\Lambda}\ T}$.
\end{lem}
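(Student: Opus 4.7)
The plan is to mirror the construction of $k_a^m$ from the proof of Lemma~\ref{lem3.1}, with the roles of $K$ and $\Lambda$ (and of the two sides of the double $\Ohat$) interchanged.

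I would first verify that $\eta_a^m$ is a well-defined meromorphic $1$-form on $\Ohat$. Conjugating (\ref{KL_m}) gives the boundary identity $\Lambda_a^m(z)\,T(z) = -\overline{K_a^m(z)}\,\overline{T(z)}$ on $b\Om$, and since $dz = T(z)\,ds$ and $d\bar z = \overline{T(z)}\,ds$ along $b\Om$, the holomorphic $1$-form $\Lambda_a^m(z)\,dz$ on $\Om$ and the holomorphic $1$-form $-\overline{K_a^m(z)}\,d\bar z$ on $\Ot$ agree along the common boundary. They therefore paste to a single meromorphic $1$-form $\eta_a^m$ on $\Ohat$. The only pole of $\eta_a^m$ lies at $a\in\Om$: on $\Om$, $\Lambda_a^m(z)\,dz$ has a pole of order $m+2$ there whose principal part is a constant multiple of $(z-a)^{-m-2}\,dz$, the differential of a rational function, hence residue-free; on $\Ot$, $-\overline{K_a^m(z)}\,d\bar z$ is holomorphic because $K_a^m$ is smooth on $\Obar$.

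Because $\eta_a^m$ has no residues, the line integral (\ref{lam}) is path-independent on $\Ohat\setminus\{a\}$ and produces a single-valued meromorphic function $h$ on $\Ohat$ that vanishes at $\tilde b\in\Ot$, is holomorphic on all of $\Ot$, and has a pole of order $m+1$ at $a$. Setting $\lambda_a^m := h|_{\Obar}$ gives the desired antiderivative of $\Lambda_a^m$ on $\Om$ with the asserted pole at $a$. Algebraicity of $\lambda_a^m$, and the rationality of $\lambda_a^m|_{b\Om}$ in $z$ and $\bar z$, then follow verbatim from the Gustafsson primitive-pair argument recalled in the proof of Lemma~\ref{lem3.1}.

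For the normal derivative, I would apply (\ref{normal}) with $u=\lambda_a^m$ and $u_{\bar z}\equiv 0$ (valid because $\lambda_a^m$ is holomorphic on a collared neighborhood of $b\Om$ inside $\Om$), giving $\dee\lambda_a^m/\dee n = -i\,\Lambda_a^m(z)\,T(z)$ directly. For the onto statement, I would invoke (\ref{KL_m}) one more time to observe that on $b\Om$ the two subspaces $\mathbf{\Lambda}T + \overline{\mathbf{\Lambda}\,T}$ and $\KK T + \overline{\KK\,T}$ of smooth complex-valued functions coincide, so surjectivity is inherited from Lemma~\ref{lem3.1}. The only step that needs real care is the pole-and-residue bookkeeping that makes $\eta_a^m$ well-defined on $\Ohat$ with a single-valued primitive; everything else is a direct transcription of the argument for $k_a^m$.
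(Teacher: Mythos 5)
Your proposal is correct and follows essentially the same route as the paper: the paper defines $\eta_a^m$ by exactly the pasting you describe (justified by the conjugate of (\ref{KL_m})), obtains $\lambda_a^m$ as the primitive (\ref{lam}) of this residue-free form on the genus-zero double, gets algebraicity and boundary rationality from Gustafsson's primitive-pair argument, and deduces surjectivity from the fact that (\ref{KL_m}) identifies $\mathbf{\Lambda}T+\overline{\mathbf{\Lambda}\,T}$ with $\KK T+\overline{\KK\,T}$. Your write-up in fact supplies the pole-and-residue bookkeeping that the paper leaves implicit when it says Lemma~\ref{lem3.1} ``carries over naturally.''
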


Finally, notice that, by identity (\ref{KL_m}), the terms
$\overline{K_a^m T}$ in Theorem~\ref{thm3.1} can be
rewritten as $-\Lambda_a^m T$, and following the rest
of the argument through in the proof of Theorem~\ref{thm3.2}
yields that
$$\RS=\C+{\mathbf k}+\blambda.$$
Note that a rational function $R(z,\bar z)$
extends to be a meromorphic function on the double via
the formula $R(z,S(z))$, since $S(z)$ extends to the
double. The elements ${\mathbf k}+\blambda$ in the
decomposition are now seen to be completely determined
by the principal parts of the poles of $R(z,S(z))$ in
$\Om$ and the reflection of $\Om$. The poles inside
determine the $\blambda$ element,
and the poles on the reflected side determine the
$\mathbf k$ element. Hence, it is a genuine analogue
of a partial fractions decomposition. It corresponds
to the simple observation that the restrictions of
rational functions to the unit circle are the same
as the restrictions of complex rational functions.
This is the point where it would be more natural to
define the functions $k_a^m$ and $\lambda_a^m$ as
the meromorphic functions on the double to which they
extend. Then the formula for $R(z,\bar z)$ on
the boundary extends to a formula expressing $R(z,S(z))$
on the double and it is clear how the poles determine
the elements in the expansion.

Finally, we remark that all the results of this paper extend
to allow pole-like singularities of rational functions
restricted to the boundary. Indeed, formulas (\ref{KL})
and (\ref{KL_m}) are valid when both $z$ and $w$ are in the
boundary $z\ne w$ because the Green's function identities
also extend to $b\Om\times b\Om$ minus the boundary
diagonal. Hence, $\RR$ can be decomposed as a vector space
sum of the antiderivatives of elements in the extended
Bergman span (which include base points $a$ on the boundary),
and conjugates of such functions. It is interesting to
note that
$$T(z)K(z,w)\,\overline{T(w)}=-\overline{T(z)\Lambda(z,w)T(w)}
=T(w)K(w,z)\ \overline{T(z)},
$$
when both $z$ and $w$ are on the boundary, and so the terms
involving $\overline{K_w^m(z)}\,\overline{T(z)}$ can be
converted to terms $K_w^m(z)T(z)$ when $w$ is in the boundary.

\section{Special descriptions of the boundary of an area
quadrature domain}
\label{sec4}

We are now in a position to apply our results to describing
the boundary curves of a simply connected smooth area
quadrature domain. If $a$ is a point on the boundary, the
function
$$\frac{1}{S(z)-\bar a}$$
has a simple pole at $a$, and so there is a constant $A$
such that
$$R(z)=\frac{1}{S(z)-\bar a}-\frac{A}{z-a}$$
has a removable singularity at $a$. Note that, because
the boundary of $\Om$ cannot be a line, $R(z)$ is a
function in $\RS$ that is not the zero function, and
so can be expressed as a sum $c+k_1+\overline{k_2}$ where
$c$ is a constant and $k_1$ and $k_2$ are elements in
$\mathbf k$ that are not both zero. Hence, the Jordan
curve is given by the set where
$$\frac{1}{\bar z-\bar a}-
\overline{k_2(z)}
=
\frac{A}{z-a}+c+
k_1(z),$$
where $k_1$ and $k_2$ are algebraic functions that are
holomorphic on $\Om$ whose smooth boundary values are
rational functions of $z$ and $\bar z$. Using the other
decompositions lead to interesting variations, which we
leave to the reader.

\section{Back to step~1}
\label{sec5}

The change of variables introduced in \S\ref{sec2}
has some interesting properties that we now discuss.

The main theorems of this paper can be thought of as
being invariant under changes of Bergman coordinates. Indeed,
as shown in \cite{B6}, a biholomorphic mapping between
area quadrature domains is an algebraic function whose
restriction to the boundary is rational in $z$ and
$\bar  z$ because it and its inverse extend to the doubles
as meromorphic functions. Hence, the mapping is a
birational map when restricted to the boundaries which
preserves all the algebraic properties of the decompositions
described in \S\ref{sec3}.

Area quadrature domains are such that elements of
their Bergman span are algebraic functions whose
restrictions to the boundary are rational in $z$
and $\bar z$ because they extend to the double as
meromorphic functions. Furthermore, the function
$T(z)$ on the boundary is such that $T(z)^2$ extends
to the double as a meromorphic function, and $T(z)^2$
is therefore a rational function of $z$ and $\bar z$
on the boundary. Consequently the D-to-N map on
an area quadrature domain takes rational functions
of $z$ and $\bar z$ to real algebraic functions.

The procedure described in Step~1 to find Bergman
coordinates close to the identity can easily be
adapted to prove the same result in the multiply
connected setting. It is worth noting here that
there is a very elementary procedure for producing
the change of variables in the simply connected
case, as shown in \cite{BGS}. Indeed, suppose $\Om$
is a bounded domain
with smooth real analytic boundary. Then a Riemann
mapping function $F:\Om\to D_1(0)$ extends holomorphically
past the boundary. Approximate the inverse $F^{-1}$
by a polynomial $P(z)$ on a neighborhood of the closed
unit disc. Shapiro and Aharonov \cite{AS} proved
that simply connected area quadrature domains
are given as images of the unit disc under
rational conformal mappings. Hence the image of
the unit disc under $P$ is an area quadrature
domain nearby to $\Om$ and the change of variables is
$P\circ F$. Shapiro and Ullemar \cite{SU} proved
that a simply connected domain $\Om$ is a quadrature
domain with respect to arclength if and only if the
derivative of the inverse of the Riemann map is the
square of a rational function. We can approximate
a square root of the nonvanishing $(F^{-1})'$  by a
polynomial $p(z)$, and let $P(z)$ be a polynomial
antiderivative of $p(z)^2$ to obtain a domain
nearby to $\Om$ given by the image of the unit disc
under $P$ that is a quadrature domain with
respect to both area and arclength. This domain
has all the properties we admire in an area
quadrature domain plus the property that $T(z)$
extends to the double as a meromorphic function.
Hence, it is a rational function of $z$ and $\bar z$
on the boundary and we have a domain whose
D-to-N map takes rational functions of $z$ and $\bar z$
to rational functions of $z$ and $\bar z$.

\section{The multiply connected case}
\label{sec6}

Assume that $\Om$ is a bounded $n$-connected smooth area
quadrature domain. Many of the same arguments we have
used in the simply connected setting can be carried
through without change, and others with only minor
modifications involving the harmonic measure functions
$\omega_j$ and the associated holomorphic functions $F_j'$,
$j=1,\dots,n-1$. We sketch the main arguments here.

Let $\gamma_n$ denote the outer boundary
curve of $\Om$ and let $\gamma_j$, $j=1,\dots,n-1$
denote the $n-1$ inner boundary curves. The function
$\omega_j$ is the harmonic function on $\Om$ with
boundary values equal to one on $\gamma_j$ and zero
on the other boundary curves. The function $F_j'$
is a holomorphic function on $\Om$ equal to
$2(\dee\omega_j/\dee z)$. If one considers a locally
defined harmonic conjugate function $v_j$ for
$\omega_j$, then $F_j'$ is the complex derivative
of $\omega_j+iv_j$. Even though $v_j$ is only locally
defined, $F_j'$ is globally defined. However, even
though we use a prime in the notation, $F_j'$ is
not the complex derivative of a function defined
on $\Om$ when $n>1$.

It is well know that the matrix of periods of
the $F_j'$ on the $n-1$ inner boundary curves is
nonsingular. Hence, there are unique constants
$c_j$ and a unique holomorphic function $k_a^m(z)$
on $\Om$ such that
\begin{equation}
\label{kKF}
(k_a^m)'(z)=K_a^m(z)+\sum_{j=1}^{n-1}c_jF_j'(z).
\end{equation}
The functions $k_a^m$ can be defined by line
integrals as in \S\ref{sec3}. Let $\mathbf k$ denote
the complex linear span of the functions $k_a^m$.

Because $\omega_j$ is constant on boundary curves,
its tangential derivative
$$\frac{\dee\omega_j}{\dee z}\ T(z)+
\frac{\dee\omega_j}{\dee\bar z}\ \overline{T(z)}$$
is zero on the boundary, and we deduce the well-known
identity
$$F_j'(z)\,dz=-\overline{F_j'(z)}\, d\bar z$$
on the boundary. Furthermore, the normal derivative
of $\omega_j$ is given by
$$-i
\frac{\dee\omega_j}{\dee z}\ T(z)+i
\frac{\dee\omega_j}{\dee\bar z}\ \overline{T(z)}
=-2i \frac{\dee\omega_j}{\dee z}\ T(z)=-iF_j'T(z).$$

The part of the new proof we gave of Theorem~\ref{thm3.1} in
the simply connected case that showed that the image of
the rational functions $\RS$ under the D-to-N map is equal
to $\KK T+\overline{\KK T}$ is valid in the
multiply connected setting.

\begin{thm}
\label{thm6.1}
The D-to-N map associated to a bounded smooth finitely
connected area quadrature domain $\Om$
maps $\RS$ into $\KK T + \overline{\KK\ T}$. It is close
to being one-to-one in the sense that, if two functions map
to the same function, they must differ by a constant.
\end{thm}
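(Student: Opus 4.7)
My plan is to follow the author's own hint exactly. The ``into'' containment is the argument from Theorem~\ref{thm3.1} verbatim -- the only place simple connectivity was used there was to upgrade the containment to \emph{uniqueness of the Bergman span representatives} via the double-of-genus-zero argument, and that is no longer claimed here. The ``close to one-to-one'' assertion has nothing to do with Bergman theory; it is the standard uniqueness statement for the Neumann problem on a connected planar domain.

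For the containment, let $R(z,\bar z)$ represent $r\in\RS$ with harmonic extension $u$. Since $\Om$ is an area quadrature domain, $S(z)$ extends meromorphically to $\Om$, so $R(z,S(z))$ is meromorphic on $\Om$ with isolated poles and agrees with $r$ on $b\Om$. Subtract off a finite linear combination of Green's function derivatives $G_a^m(z)$ arranged to cancel the holomorphic principal parts at those poles; since each $G_a^m$ vanishes on $b\Om$ (because $G(\cdot,w)$ does, for every $w\in\Om$), boundary values are unchanged, and uniqueness of the Dirichlet problem, which is insensitive to the topology of $\Om$, forces the result to equal $u$. Differentiating in $\bar z$ kills the meromorphic term and expresses $\dee u/\dee\bar z$ as a linear combination of conjugates of $K_a^m$'s. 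Starting instead from the antimeromorphic extension $R(\overline{S(z)},\bar z)$ and subtracting off combinations of $G_a^{\overline m}$ expresses $\dee u/\dee z$ as a linear combination of $K_a^m$'s. Formula~(\ref{normal}) then puts $\dee u/\dee n$ in $\KK T + \overline{\KK\ T}$.

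For the near-injectivity, suppose $\phi_1,\phi_2\in\RS$ have the same D-to-N image, with harmonic extensions $u_1,u_2$. Setting $v = u_1 - u_2$, $v$ is harmonic on $\Om$ with $\dee v/\dee n\equiv 0$ on $b\Om$, and Green's first identity gives
$$\int_\Om |\nabla v|^2\,dA \;=\; \int_{b\Om} v\,\frac{\dee v}{\dee n}\,ds \;=\; 0.$$
Hence $\nabla v\equiv 0$ on the connected domain $\Om$, so $v$ is a single constant, and $\phi_1-\phi_2$ is therefore that same constant on every boundary component of $\Om$.

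I do not expect a genuine obstacle here; both steps transfer from the simply connected setting with only cosmetic changes. What genuinely fails in the multiply connected case is the assertion beyond what Theorem~\ref{thm6.1} claims: surjectivity onto $\KK T + \overline{\KK\ T}$ together with uniqueness of the Bergman span representatives. Antiderivatives of $K_a^m$ on $\Om$ exist only after correction by the harmonic measure differentials $F_j'$ as in~(\ref{kKF}), so the candidate preimages $k_a^m$ produce normal derivatives with extra $F_j' T$ tails, and the holomorphic $1$-forms on the higher-genus double are no longer forced to vanish. That is exactly why Theorem~\ref{thm6.1} stops at an ``into'' statement.
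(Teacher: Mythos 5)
Your proposal is correct and follows the paper's own route: the ``into'' containment is exactly the Green's-function/Schwarz-function argument from the proof of Theorem~\ref{thm3.1}, which the paper explicitly notes carries over to the multiply connected setting, and the near-injectivity is the standard Neumann uniqueness argument that the paper leaves implicit. Your closing remark about why surjectivity and uniqueness of the Bergman span representatives fail (the $F_j'$ corrections in~(\ref{kKF}) and the nonvanishing holomorphic $1$-forms on the higher-genus double) also matches the paper's discussion in \S\ref{sec6}.
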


Given a rational function
$R(z,\bar z)$ in $\RS$, let $\phi$ denote its harmonic
extension to $\Om$ and represent the normal derivative
$$\frac{\dee\phi}{\dee n}=\kappa_1 T+\overline{\kappa_2 T},$$
where $\kappa_1$ and $\kappa_2$ are in the Bergman span.
Since the normal derivative of $k_a^m$ is $-i(k_a^m)'T$,
formula (\ref{kKF}) yields that there are elements $k_1$
and $k_2$ in $\mathbf k$ and constants $c_j$ such that
the normal derivative of
$$k_1+\overline{k_2}+\sum_{j=1}^{n-1}c_j\omega_j$$
is equal to the normal derivative of $\phi$. Hence,
$\phi$ and this expression differ by a constant and
we conclude that $R(z,\bar z)$ and $k_1+\overline{k_2}$
differ by a constant on each boundary component. Thus,
the same result we obtained in the simply connected case
holds in the multiply connected setting, with that added
stipulation that the constants involved might be
different on different boundary curves. We can also
use the Bergman kernel to solve the Dirichlet problem
with real rational boundary data if we are willing
to toss in the functions $\omega_j$. These ideas are
explored further in \cite{B2}

\section{Similar results on nonquadrature domains}
\label{sec7}

We close by remarking that many of the results of this
paper extend naturally to the setting of general
bounded domains with smooth boundary if we replace
the space of rational functions of $z$ and $\bar z$
restricted to the boundary by the space of functions
on the boundary that extend meromorphically to the
double of the domain, as shown in \cite{B2}.

\end{document}